\numberwithin{equation}{section}
\newtheorem{thm}{Theorem}
\newtheorem{theorem}[thm]{Theorem}
\newtheorem{lemma}[thm]{Lemma}
\theoremstyle{remark}
\title[Sharp estimates for a class of Littlewood-Paley operators]{Sharp asymptotic estimates for a class of Littlewood-Paley operators}
\author{Odysseas Bakas}
\address{Centre for Mathematical Sciences, Lund University, 221 00 Lund, Sweden}
\email{odysseas.bakas@math.lu.se}
\subjclass[2010]{Primary 42B25; Secondary 42A45}
\keywords{Littlewood-Paley square functions, lacunary sets of finite order, weighted inequalities}
\begin{document}

\begin{abstract} It is well-known that Littlewood-Paley operators formed with respect to lacunary sets of finite order are bounded on $L^p (\mathbb{R})$ for all $1<p<\infty$. In this note it is shown that
$$  \| S_{\mathcal{I}_{E_2}} \|_{L^p (\mathbb{R}) \rightarrow L^p (\mathbb{R})} \sim (p-1)^{-2} \quad (p \rightarrow 1^+) ,$$
where $S_{\mathcal{I}_{E_2}}$ denotes the classical Littlewood-Paley operator formed with respect to the second order lacunary set $ E_2 = \{ \pm  ( 2^k - 2^l ) : k,l \in \mathbb{Z} \text{ with } k>l  \}  $.  
\end{abstract}

\maketitle

%%%%%%%%%%%%%%%%%%%%%%%%%%%%
%%%%%%%%%%%%%%%%%%%%%%%%%%%%
%%%%%%%%%%%%%%%%%%%%%%%%%%%%
%%%%%%%%%%%%%%%%%%%%%%%%%%%%

\section{Introduction}\label{intro}

If $\mathcal{I}$ is a collection of mutually disjoint intervals in the real line, then the corresponding Littlewood-Paley operator $S_{\mathcal{I}}$ is given by
$$ S_{\mathcal{I}} (f) : = \Big( \sum_{I \in \mathcal{I} } | P_I (f) |^2 \Big)^{1/2} . $$ 
Here $P_I$ denotes the Fourier multiplier operator with symbol $\chi_I$. 
It is well-known that if we consider the collection $\mathcal{I}_{E_1} : = \{ [2^j , 2^{j+1}) \}_{j \in \mathbb{Z}} \cup \{ [- 2^{j+1} , - 2^j ) \}_{j \in \mathbb{Z}}$, then $S_{\mathcal{I}_{E_1}}$ is an $L^p$-bounded operator for all $p \in (1, \infty)$ and moreover, for each $p \in (1, \infty)$ there exist positive constants $A_{\mathcal{I}_{E_1}, p}$ and $B_{\mathcal{I}_{E_1}, p}$ such that
\begin{equation}\label{LP_0}
A_{\mathcal{I}_{E_1}, p} \| f \|_{L^p (\mathbb{R})} \leq \| S_{\mathcal{I}_{E_1}} (f) \|_{L^p (\mathbb{R})} \leq B_{\mathcal{I}_{E_1}, p} \| f \|_{L^p (\mathbb{R})} \quad (1 < p < \infty). 
\end{equation}
See e.g. Chapter IV in \cite{Singular_integrals}. In \cite{Carleson}, L. Carleson proved that if we consider the collection $\mathcal{I}_0 = \{ [j-1,j) \}_{j \in \mathbb{N}} \cup \{ [-j , -j +1 ) \}_{j \in \mathbb{N}}$, then $S_{\mathcal{I}_0}$ is $L^p$-bounded for $p \in [2, \infty)$ but is {\it not} bounded on $L^p (\mathbb{R})$ when $p \in (1,2)$. A different proof of Carleson's result was given by A. C\'ordoba in \cite{C}. In \cite{RdF}, J. L. Rubio de Francia extended the aforementioned result by showing that for any collection $\mathcal{I}$ in $\mathbb{R}$ of mutually disjoint intervals the corresponding Littlewood-Paley operator  $S_{\mathcal{I}}$ is bounded on $L^p (\mathbb{R})$ for all $p \in [2, \infty)$. For alternative proofs and extensions of Rubio de Francia's theorem in the one-dimensional case, see J. Bourgain \cite{Bourgain_85}, P. Sj\"olin \cite{Sjolin}, and S. V. Kislyakov and D. V. Parilov \cite{KP}. To the best of our knowledge, the problem of characterising all admissible collections $\mathcal{I}$ in $\mathbb{R}$ of mutually disjoint intervals such that $S_{\mathcal{I}}$ is $L^p$-bounded for all $p \in (1, \infty)$ seems to be still open; see the paper \cite{HK_95} of K. E. Hare and I. Klemes for a relevant conjecture as well as for some related partial results in the periodic setting. See also \cite{HK_92} and \cite{HK_89}. For more details on topics related to Rubio de Francia's theorem, see M. Lacey's paper \cite{Lacey} and the references therein. 

In 1939, J. Marcinkiewicz in his classical paper \cite{M} showed that the periodic Littlewood-Paley operator formed with respect to the second order lacunary set $\{ \pm ( 2^k + 2^l ) : k,l \in \mathbb{N}_0 \text{ with } k > l \}$ is bounded on $L^p (\mathbb{T})$ for all $p \in (1, \infty)$; see \cite[Th\'eor\`eme 8]{M}. In 1981, in \cite{Sj_Sj}, P. Sj\"ogren and Sj\"olin developed a systematic method of successively constructing collections $\mathcal{I}$ of intervals in $\mathbb{R}$ such that the corresponding Littlewood-Paley operator $S_{\mathcal{I}}$ is bounded on $L^p (\mathbb{R})$ for all $p \in (1, \infty)$. To be more specific, following \cite{Sj_Sj}, if $E$, $E'$ are two closed null sets in $\mathbb{R}$, then $E'$ is said to be a {\it successor} of $E$ whenever there exists a constant $c_{E,E'}>0$ such that for every $x,y \in E'$ with $x \neq y$ one has $|x-y| \geq c_{E,E'} \mathrm{dist} (x, E)$. It was shown by Sj\"ogren and Sj\"olin that if a set $E'$ is a successor of a closed null set $E$ and $S_{\mathcal{I}_E}$ is $L^p$-bounded for some $p \in (1, \infty)$, then $S_{\mathcal{I}_{E'}}$ is also bounded on $L^p (\mathbb{R})$; see \cite[Theorem 1.2]{Sj_Sj}. In particular, since the second order lacunary set
$ E_2 : = \{ \pm ( 2^k - 2^l ) : k,l \in \mathbb{Z}\ \text{with} \ k>l \}$ is a successor of the lacunary set $E_1 : = \{ \pm 2^j \}_{j \in \mathbb{Z}} $ and $S_{\mathcal{I}_{E_1}} $ satisfies \eqref{LP_0}, one deduces that for each $p \in (1,\infty)$ there exist positive constants $A_{\mathcal{I}_{E_2}, p}$ and $B_{\mathcal{I}_{E_2}, p}$ such that 
\begin{equation}\label{LP_I_2}
A_{\mathcal{I}_{E_2}, p} \| f \|_{L^p (\mathbb{R})} \leq \| S_{\mathcal{I}_{E_2}} (f) \|_{L^p (\mathbb{R})} \leq B_{\mathcal{I}_{E_2}, p} \| f \|_{L^p (\mathbb{R})} \quad (1 <  p < \infty). 
\end{equation}
Here we adopt the following convention: if $K$ is an infinite countable set in $\mathbb{R}$ that can be written as $K = \{ a_n \}_{n \in \mathbb{Z}}$ in $\mathbb{R}$ with $a_n < a_{n+1}$ for all $n \in \mathbb{Z}$ and moreover, $ a_n \rightarrow - \infty$ as $n \rightarrow - \infty$ and $a_n \rightarrow + \infty$ as $n \rightarrow + \infty$, then $\mathcal{I}_K$ denotes the collection $\{ [ a_n, a_{n+1}) : n \in \mathbb{Z}\}$.

The main goal of this note is to determine the behaviour of the best constant $B_{\mathcal{I}_{E_2}, p}$ in \eqref{LP_I_2} `near' $p=1$ or, in other words, to establish sharp asymptotic estimates for the $L^p-L^p$ operator norm of $S_{\mathcal{I}_{E_2}}$ as $p \rightarrow 1^+$. Before stating our results, let us mention that it follows from the work of Bourgain \cite{Bourgain_89} that in the lacunary case $E_1 = \{ \pm 2^j \}_{j \in \mathbb{Z}} $ the best constant $B_{\mathcal{I}_{E_1}, p} $ in the classical Littlewood-Paley inequality \eqref{LP_0} behaves like $(p-1)^{-3/2}$ as $p \rightarrow 1^+$, namely there exist absolute constants $c_1,c_2 >0$ such that
\begin{equation}\label{Bourgain_3/2}
\frac{c_1}{(p-1)^{3/2}} \leq  \| S_{\mathcal{I}_{E_1}} \|_{L^p (\mathbb{R}) \rightarrow L^p (\mathbb{R})} \leq \frac{c_2}{(p-1)^{3/2}}  \quad (1 < p \leq 2).
\end{equation}
To be more precise, in \cite{Bourgain_89}, Bourgain established a periodic version of \eqref{Bourgain_3/2} and his proof was obtained in \cite{Bourgain_89} by using a classical inequality due to S.-Y. A. Chang, J. M. Wilson, and T. H. Wolff on dyadic martingales \cite{CWW} combined with explicit formulas for translations of dyadic systems \cite{Bourgain_86} and appropriate vector-valued inequalities.  
An alternative proof of the upper estimate in \eqref{Bourgain_3/2} in the periodic setting was given by the author in \cite{Bakas_19} by using the work of T. Tao and J. Wright on Marcinkiewicz multiplier operators \cite{TW} and Tao's converse extrapolation theorem \cite{Tao}. Recently, in \cite{Lerner}, A. K. Lerner showed that 
\begin{equation}\label{Lerner_bound}
\| S_{\mathcal{I}_{E_1}} \|_{L^2 (w) \rightarrow L^2 (w)} \lesssim [w]_{A_2}^{3/2}
\end{equation}
for any $A_2$ weight $w$ on the real line; see \cite[Theorem 1.1]{Lerner}. Moreover, as explained in \cite{Lerner}, by combining \eqref{Lerner_bound} with an extrapolation result due to J. Duoandikoetxea \cite{Duoandikoetxea} one obtains yet another proof of \eqref{Bourgain_3/2}; see \cite[Remark 4.2]{Lerner}. The main ingredients in Lerner's proof of \eqref{Lerner_bound} in \cite{Lerner} were an appropriate variant of the Chang-Wilson-Wolff inequality \cite{CWW} (see \cite[Theorem 2.7]{Lerner}) as well as sharp weighted estimates for multiplier operators of the form $P_I T_m $ with $T_m$ being a multiplier operator whose symbol $m$ satisfies a Marcinkiewicz-type condition on $I$; see \cite[Lemma 3.2]{Lerner}. For sparse bounds for Rubio de Francia-type operators $S_{\mathcal{I}}$ and for Marcinkiewicz-type multiplier operators in the Walsh-Fourier setting, we refer the reader to the recent papers \cite{GRS} and \cite{CCDLO}, respectively. 

As mentioned above, this note focuses on the behaviour of the $L^p-L^p$ operator norm of $S_{\mathcal{I}_{E_2}}$ `near' $p=1^+$. More specifically, our main result is the following sharp asymptotic estimate as $p \rightarrow 1^+$. 

\begin{theorem}\label{p-p}
There exist absolute constants $c_1, c_2 >0$ such that
$$ \frac{c_1} { (p-1)^2 } \leq \| S_{\mathcal{I}_{E_2}} \|_{L^p (\mathbb{R}) \rightarrow L^p (\mathbb{R})} \leq \frac{ c_2 } { (p-1)^2 }  $$
for all $1 < p \leq 2$.
\end{theorem}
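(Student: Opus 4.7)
The plan is to establish the upper and lower estimates separately, following the philosophy of \cite{Bakas_19} which treated the analogous question for $E_1$.

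For the upper bound, I would start by exploiting the fact that $E_2$ is a successor of $E_1$: every $I \in \mathcal{I}_{E_2}$ lies in a unique $J \in \mathcal{I}_{E_1}$, so
\[
S_{\mathcal{I}_{E_2}}(f)^2 \;=\; \sum_{J \in \mathcal{I}_{E_1}} |S_{\mathcal{I}_{E_2}^J}(P_J f)|^2,
\]
where $\mathcal{I}_{E_2}^J := \{I \in \mathcal{I}_{E_2} : I \subset J\}$. A direct inspection of $E_2$ (using the identity $2^k - 2^l = 2^k - y$ with $y \in \{2^l\}_{l<k}$) shows that each sub-collection $\mathcal{I}_{E_2}^J$ is affinely equivalent to a first-order lacunary family accumulating at an endpoint of $J$, so Bourgain's $(p-1)^{-3/2}$ bound applies to each $S_{\mathcal{I}_{E_2}^J}$ uniformly in $J$. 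A naive iteration through a vector-valued Littlewood--Paley inequality for $S_{\mathcal{I}_{E_1}}$ would, however, produce only $(p-1)^{-3}$; to recover the sharp $(p-1)^{-2}$ a factor of $(p-1)^{-1}$ must be saved.

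To save this factor, I would linearise via Khintchine's inequality, writing
\[
\|S_{\mathcal{I}_{E_2}}(f)\|_{L^p(\mathbb{R})} \;\sim\; \Bigl( \mathbb{E}_\varepsilon \Big\| \sum_{I \in \mathcal{I}_{E_2}} \varepsilon_I\, P_I f \Big\|_{L^p(\mathbb{R})}^{p} \Bigr)^{1/p},
\]
and observing that the symbol $m_\varepsilon = \sum_I \varepsilon_I \chi_I$ is piecewise constant with discontinuities only on $E_2$. In particular, the restriction of $m_\varepsilon$ to each $J \in \mathcal{I}_{E_1}$ is a Marcinkiewicz multiplier relative to the first-order decomposition $\mathcal{I}_{E_2}^J$, which together with its behaviour across the $J$'s places $m_\varepsilon$ in the class of \emph{second-order} Marcinkiewicz multipliers. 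A sharp $L^p$ bound of $(p-1)^{-2}$ for such multipliers -- derivable from the Tao--Wright analysis of Marcinkiewicz multipliers \cite{TW} combined with Tao's converse extrapolation theorem \cite{Tao}, in the spirit of \cite{Bakas_19} -- then yields the upper bound upon taking the supremum over signs. The principal obstacle is exactly this last step: the saving of $(p-1)^{-1/2}$ per level of nesting (instead of $(p-1)^{-3/2}$) cannot be obtained by iterating first-order estimates and must be extracted from the Marcinkiewicz structure itself.

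For the matching lower bound, I would adapt Bourgain's construction from \cite{Bourgain_89} to the two-parameter setting: choose a finite trigonometric polynomial $f_N$ whose Fourier support is a carefully selected finite subset of $E_2$ (indexed by pairs $(k,l)$, $k>l$, ranging over a box of side $\sim \log(1/(p-1))$), bound $\|f_N\|_{L^p}$ from above by Khintchine and Riesz-product considerations, and use the fact that each $P_I f_N$ isolates a single character so that $\|S_{\mathcal{I}_{E_2}}(f_N)\|_{L^p}$ can be computed essentially exactly. Optimising the size of the box against $p-1$ should yield the sharp $(p-1)^{-2}$ lower bound; the delicate point is to arrange $f_N$ so that both the upper $L^p$ estimate on $f_N$ and the lower $L^p$ estimate on $S_{\mathcal{I}_{E_2}}(f_N)$ are simultaneously sharp.
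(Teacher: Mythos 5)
Both halves of your argument diverge from the paper's proof, and both contain genuine gaps.

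For the upper bound, the paper does not argue via multiplier theorems at all: it first proves the sharp weighted estimate $\| S_{\mathcal{I}_{E_2}} \|_{L^2 (w) \rightarrow L^2 (w)} \lesssim [w]_{A_2}^2$ (Theorem \ref{main}, via Lerner's variant of the Chang--Wilson--Wolff inequality, a frequency translation, and Lemma \ref{modification}) and then applies Duoandikoetxea's sharp extrapolation theorem to convert the quadratic $A_2$ dependence into $(p-1)^{-2}$. Your route instead reduces everything to a $(p-1)^{-2}$ bound for ``second-order Marcinkiewicz multipliers,'' which you do not prove and which you yourself flag as the principal obstacle. No such bound is available off the shelf: as the paper notes, feeding the Sj\"ogren--Sj\"olin successor argument for $E_2$ through the Tao--Wright/converse-extrapolation machinery of \cite{TW, Tao, Bakas_19} yields only $(p-1)^{-5/2}$, and the naive iteration you mention gives $(p-1)^{-3}$. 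Identifying the mechanism that saves the remaining power is precisely the content of the paper, so your upper bound is a restatement of the problem rather than a proof.

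For the lower bound, the construction you describe cannot reach $(p-1)^{-2}$. If $f_N$ has spectrum in a finite set $B \subset E_2$ and each $P_I f_N$ isolates a single character, then $S_{\mathcal{I}_{E_2}}(f_N)$ is the constant $\| \widehat{f_N} \|_{\ell^2(B)}$ on the relevant unit interval, while $\| f_N \|_{L^p} \geq \| f_N \|_{L^1} \geq \| \widehat{f_N} \|_{\ell^{\infty}} \geq |B|^{-1/2} \| \widehat{f_N} \|_{\ell^2}$; hence the ratio $\| S_{\mathcal{I}_{E_2}}(f_N) \|_{L^p} / \| f_N \|_{L^p}$ is at most $|B|^{1/2}$, which for a box of side $\sim \log \frac{1}{p-1}$ is only $\sim \log \frac{1}{p-1}$, and enlarging the box is blocked by the requirement that $\| f_N \|_{L^p}$ stay bounded. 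The feature of Bourgain's example that your sketch misses is that the spectrum of the test function fills \emph{whole intervals}: the paper takes $\eta_N$ with $\widehat{\eta_N} \equiv 1$ on $[N, 2N]$, so that $P_{I_{k,l}^+}(\eta_N)$ is a full modulated Dirichlet-type kernel of length $2^{l-1}$ with $\| P_{I_{k,l}^+}(\eta_N) \|_{L^1([0,1])} \sim l$. This logarithmic gain per block accumulates to $\big( \sum_{k \leq \lfloor \log N \rfloor} \sum_{l < k} l^2 \big)^{1/2} \sim (\log N)^2$, while $\| \eta_N \|_{L^p} \lesssim N^{(p-1)/p} = O(1)$ for $\log N \sim (p-1)^{-1}$, which is what produces the $(p-1)^{-2}$ lower bound.
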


The lower estimate in Theorem \ref{p-p} is obtained by adapting the corresponding argument of Bourgain  that establishes the lower estimate in \cite[Theorem 1]{Bourgain_89}. The upper estimate in Theorem \ref{p-p} is a consequence of the following result. 

\begin{theorem}\label{main}
There exists an absolute constant $c_0 > 0$ such that 
$$ \| S_{\mathcal{I}_{E_2}} \|_{L^2 (w) \rightarrow L^2 (w)} \leq c_0 [w]_{A_2}^2 $$
for all $A_2$ weights $w$ on the real line.
\end{theorem}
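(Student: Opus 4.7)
The plan is to adapt Lerner's strategy for the $[w]_{A_2}^{3/2}$ bound on $S_{\mathcal{I}_{E_1}}$ (see \cite{Lerner}) to the second-order lacunary case. The starting point is the nested decomposition coming from the successor relation between $E_1$ and $E_2$: every $I \in \mathcal{I}_{E_2}$ sits inside a unique $J \in \mathcal{I}_{E_1}$, so that $P_I = P_I P_J$ and
$$S_{\mathcal{I}_{E_2}}(f)^{2} = \sum_{J \in \mathcal{I}_{E_1}} S_{\mathcal{I}_{E_2,J}}(P_J f)^{2},$$
where $\mathcal{I}_{E_2,J}$ denotes the subfamily of $\mathcal{I}_{E_2}$-intervals contained in $J$. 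A frequency translation (which preserves $[w]_{A_2}$) identifies $\mathcal{I}_{E_2,J}$ with a one-sided copy of the classical lacunary partition inside $J$, so each inner operator $S_{\mathcal{I}_{E_2,J}}$ is unitarily equivalent to a one-sided $S_{\mathcal{I}_{E_1}}$.

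Lerner's proof of the $[w]_{A_2}^{3/2}$ bound on $S_{\mathcal{I}_{E_1}}$ can be read as factoring through a dyadic martingale square function $S^{d}$ as $[w]_{A_2} \cdot [w]_{A_2}^{1/2}$: the first factor comes from his weighted Chang-Wilson-Wolff-type inequality (Theorem 2.7 of \cite{Lerner}) combined with his sharp multiplier bound (Lemma 3.2 of \cite{Lerner}), which control $S_{\mathcal{I}_{E_1}}(f)$ by $S^{d}(f)$ in $L^2(w)$, while the second factor is the sharp weighted bound for $S^{d}$ itself. The plan is to upgrade the comparison step to
$$\|S_{\mathcal{I}_{E_2}}(f)\|_{L^2(w)} \lesssim [w]_{A_2}^{3/2}\, \|S^{d}(f)\|_{L^2(w)},$$
which combined with the sharp weighted bound for $S^{d}$ yields Theorem \ref{main}. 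Concretely, randomizing with signs $\epsilon_I$ over $\mathcal{I}_{E_2}$ and applying Khintchine's inequality reduces the claim to a uniform $L^2(w)$-estimate for the scalar multiplier $T_{m_\varepsilon}$ with symbol $m_\varepsilon = \sum_{I \in \mathcal{I}_{E_2}} \epsilon_I \chi_I$. The symbol $m_\varepsilon$ is of doubly-Marcinkiewicz type: it has uniformly bounded Marcinkiewicz-type norm with respect to the outer lacunary family $\mathcal{I}_{E_1}$, and within each $J \in \mathcal{I}_{E_1}$ its restriction has uniformly bounded Marcinkiewicz-type norm with respect to the inner lacunary family $\mathcal{I}_{E_2,J}$. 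Two nested applications of Lerner's Lemma 3.2 --- the outer to the $\mathcal{I}_{E_1}$-structure and the inner to the $\mathcal{I}_{E_2,J}$-structure on each $J$ --- should yield the desired $[w]_{A_2}^{3/2}$ comparison against $S^{d}$.

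The main obstacle I expect is this iterated Marcinkiewicz step: arranging the two nested applications of Lerner's Lemma 3.2 so that their contributions combine to $[w]_{A_2}^{3/2}$ relative to $S^{d}$ rather than $[w]_{A_2}^{2}$. This will require using the disjointness of the outer intervals $J \in \mathcal{I}_{E_1}$ and checking that the sparse/Bellman argument underlying Lerner's Lemma 3.2, applied in the inner layer, sees the sub-lacunary partition $\mathcal{I}_{E_2,J}$ as costing only an additional $[w]_{A_2}^{1/2}$ --- the same loss as for a single dyadic square function. Once this refined iterated comparison is obtained, summation over $J$ and an application of the sharp weighted bound for $S^{d}$ will complete the proof of Theorem \ref{main}.
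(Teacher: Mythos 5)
Your strategy points in the right direction and overlaps with the paper's proof in two respects: the frequency translation that identifies each inner family $\mathcal{I}_{E_2,J}$ with a one-sided lacunary family near the origin is exactly the device used in the paper (the modulation $\widetilde{\psi}_{\nu,l}(x)=e^{-2\pi i 2^{\nu}x}\psi_{\nu,l}(x)$, which moves $I_{\nu,l}^{+}$ to $I_l^{-}=[-2^l,-2^{l-1})$), and the idea of running Lerner's Chang--Wilson--Wolff plus Lemma~3.2 machinery once per lacunary layer is the correct shape of the argument. But there is a genuine gap, and it is exactly the step you defer as ``the main obstacle'': nothing in the proposal explains how two nested applications of Lerner's Lemma~3.2 are to be combined. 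That lemma is not a free-standing multiplier bound that can be composed with itself; it estimates $\|P_K T_m f\|$ in $L^2$ of a \emph{scale-averaged} weight $\sigma_{\mu,\mathcal{J}}$, and it only becomes usable after a Chang--Wilson--Wolff application has produced that averaged weight. To iterate, the paper must (i) apply the weighted Chang--Wilson--Wolff inequality a second time, now with respect to the once-averaged weight $\sigma_{\nu,\mathcal{I}_j}$ (which requires Lerner's Lemma~3.1 to guarantee $[\sigma_{\nu,\mathcal{I}_j}]_{A_2}\lesssim[\sigma]_{A_2}$), and (ii) prove a new variant of Lemma~3.2 adapted to \emph{doubly} averaged weights $(\sigma_{\nu,\mathcal{I}})_{\mu,\mathcal{J}}$ --- this is Lemma~\ref{modification}, the main new technical ingredient. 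The exponent then closes as $\tfrac12+\tfrac12+1=2$. Your proposal contains no substitute for step (ii).

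There is also a quantitative error in your bookkeeping. You factor Lerner's $[w]_{A_2}^{3/2}$ as a $[w]_{A_2}$ comparison with $S^{d}$ times a $[w]_{A_2}^{1/2}$ ``sharp weighted bound for $S^{d}$,'' and plan to conclude by proving a $[w]_{A_2}^{3/2}$ comparison and multiplying by $[w]_{A_2}^{1/2}$. But the forward bound $\|S^{d}f\|_{L^2(w)}\lesssim[w]_{A_2}^{1/2}\|f\|_{L^2(w)}$ is false: the sharp $L^2(w)$ exponent for the dyadic square function is $1$ (the correct square-root statement is either the mixed bound $[w]_{A_2}^{1/2}[w]_{A_\infty}^{1/2}$ or the \emph{reverse} Chang--Wilson--Wolff inequality $\|g\|_{L^2(\sigma)}\lesssim[\sigma]_{A_2}^{1/2}\|S_{\phi,\mathcal{D}}g\|_{L^2(\sigma)}$, which is what Lerner actually uses, after dualizing). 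So the arithmetic $\tfrac32+\tfrac12=2$ does not close as set up. Finally, note that after your Khintchine reduction the symbol $m_{\varepsilon}=\sum_{I\in\mathcal{I}_{E_2}}\epsilon_I\chi_I$ has infinitely many jumps, hence infinite variation, on each dyadic block $[2^{k-1},2^k)$, so it is not a classical Marcinkiewicz multiplier and no off-the-shelf weighted multiplier theorem applies to it; the paper avoids this by working in the dual formulation and keeping the square-function structure throughout.
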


Notice that the lower estimate in Theorem \ref{p-p} shows that the exponent $r=2$ in $[w]_{A_2}^r$ in Theorem \ref{main} is best possible. At this point, we remark that by carefully examining the proof of \cite[Theorem 1.2]{Sj_Sj} for the case of $S_{\mathcal{I}_{E_2}}$ (and by invoking results of Tao and Wright on Marcinkiewicz multiplier operators \cite{TW}) one gets $ \| S_{\mathcal{I}_{E_2}} \|_{L^p (\mathbb{R}) \rightarrow L^p (\mathbb{R})} \lesssim (p-1)^{-5/2} $ for $p$ `close' to $1^+$. 

Theorem \ref{main} is established by suitably modifying Lerner's proof of \eqref{Lerner_bound} and the `scheme' of its proof is roughly as follows: after reformulating the problem by using duality, one employs Lerner's variant of the Chang-Wilson-Wolff inequality \cite[Theorem 2.7]{Lerner}. Then, the idea is to perform suitable `frequency translations' of the Littlewood-Paley projections appearing in the definition of $S_{\mathcal{I}_{E_2}}$ so that one can effectively use \cite[Theorem 2.7]{Lerner} again. After doing so, one completes the proof by using an appropriate modification of  \cite[Lemma 3.2]{Lerner}; see Lemma \ref{modification} below. The details of the proof of Theorem \ref{proof_2} are presented in Section \ref{proof_2}. In Section \ref{proof_1} we give the proof of Theorem \ref{p-p} and in Section \ref{remarks} we make some further remarks related to the present work and more specifically, we present periodic versions of Theorems \ref{p-p} and \ref{main} and extensions for certain lacunary sets of order $N \in \mathbb{N}$, $N \geq 2$.

%%%%%%%%%%%%%%%%
%%%%%%%%%%%%%%%%

\subsection*{Notation}
If $\alpha \in \mathbb{C}$, then $|\alpha|$ stands for the modulus of the complex number $\alpha$, whereas if $A \subseteq \mathbb{R}$ is measurable, $|A|$ denotes the Lebesgue measure of $A$. 

Given two positive quantities $A$ and $B$, if there exists an absolute constant $c_0>0$ such that $A \leq c_0 B$, we shall write $A \lesssim B$ or $B \gtrsim A$. If $A \lesssim B$ and $B \lesssim A$, we write $A \sim B$.  

As usual, the class of Schwartz functions on the real line is denoted by $\mathcal{S} (\mathbb{R})$ and $ C^{\infty}_c (\mathbb{R})$ stands for the class of $C^{\infty}$-functions with compact support in $\mathbb{R}$. 

If $g$ is an integrable function on the torus $\mathbb{T} : = \mathbb{R}/ \mathbb{Z}$, then its Fourier coefficient at $n \in \mathbb{Z}$ is given by $\widehat{g} (n) : = \int_{\mathbb{T}} g (\theta) e^{-i 2 \pi n \theta} d \theta$. 
If $f \in \mathcal{S} (\mathbb{R})$, then its Fourier transform is given by $\widehat{f} (\xi) : = \int_{\mathbb{R}} f(x) e^{- i 2 \pi \xi x} dx$, $\xi \in \mathbb{R}$. 
If $m \in L^{\infty} (\mathbb{R})$, then $T_m$ denotes the multiplier operator with symbol $m$, namely $ \widehat{T_m (f)} (\xi) = m (\xi) \widehat{f} (\xi)$, $\xi \in \mathbb{R}$ for $f \in \mathcal{S} (\mathbb{R})$. If $I \subseteq \mathbb{R}$, we write $P_I = T_{\chi_I}$.  

If $g$ is a locally integrable function on the real line and $I \subset  \mathbb{R}$ is an interval, we use the standard notation
$$ \langle g \rangle_I : = | I |^{-1} \int_I g(x) dx. $$
A non-negative, locally integrable function $w$ on $\mathbb{R}$ is said to be an $A_2 $ weight if, and only if,
$$ [w]_{A_2} := \sup_{ \substack{ I \subset \mathbb{R} : \\ \text{interval}}}  \langle w \rangle_I   \langle w^{-1} \rangle_I < \infty . $$

If $f$ is a locally integrable function on $\mathbb{R}$ then its non-centred Hardy-Littlewood maximal function $M(f)$ is given by
$$ M (f) (x) : = \sup_{  \substack{ I \ \text{interval}: \\ x \in I }} | I |^{-1} \int_I |f(y)| dy  \quad (x \in \mathbb{R}).$$
The maximal Hilbert transform $H^{\ast} (g)$ of $g \in \mathcal{S} (\mathbb{R})$ is given by
$$ H^{\ast} (g) (x) : = \sup_{\epsilon > 0} \Bigg| \int_{ |x - y| > \epsilon } \frac{g(y)}{x-y} dy \Bigg| \quad (x \in \mathbb{R}). $$ 

%%%%%%%%%%%%%%%%%%%%%%%%%%%%
%%%%%%%%%%%%%%%%%%%%%%%%%%%%
%%%%%%%%%%%%%%%%%%%%%%%%%%%%
%%%%%%%%%%%%%%%%%%%%%%%%%%%%

\section{Proof of Theorem \ref{main}}\label{proof_2}

Arguing as in \cite{Lerner}, it follows from duality that, to prove Theorem \ref{main}, it suffices to show that there exists an absolute constant $A_0 > 0 $ such that
\begin{equation}\label{dual_main+}
\Bigg\| \sum_{k \in \mathbb{Z}} \sum_{ \substack{ l \in \mathbb{Z} : \\ l < k }} P_{I_{k,l}^+} ( \psi_{k,l} ) \Bigg\|_{L^2 (\sigma)} \leq A_0 [\sigma]_{A_2}^2 \Bigg\| \Bigg( \sum_{k \in \mathbb{Z}} \sum_{ \substack{ l \in \mathbb{Z} : \\ l < k }} | \psi_{k,l} |^2 \Bigg)^{1/2} \Bigg\|_{L^2 (\sigma)}
\end{equation}
and
\begin{equation}\label{dual_main-}
\Bigg\| \sum_{k \in \mathbb{Z}} \sum_{ \substack{ l \in \mathbb{Z} : \\ l < k }} P_{I_{k,l}^-} ( \psi_{k,l} ) \Bigg\|_{L^2 (\sigma)} \leq A_0 [\sigma]_{A_2}^2 \Bigg\| \Bigg( \sum_{k \in \mathbb{Z}} \sum_{ \substack{ l \in \mathbb{Z} : \\ l < k }} | \psi_{k,l} |^2 \Bigg)^{1/2} \Bigg\|_{L^2 (\sigma)}
\end{equation}
for all $A_2$ weights $\sigma$ on $\mathbb{R}$ and for any collection of Schwartz functions $ \{ \psi_{k,l} \}_{k > l} $, where only finitely many of the functions $ \psi_{k,l}$ are non-zero. Here for $k,l \in \mathbb{Z}$ with $k>l$, we use the notation $I_{k,l}^+ := [ 2^k -2^l, 2^k -2^{l-1} )$ and $I_{k,l}^- := [- 2^k + 2^{l-1}, -2^k + 2^l )$.

By using \cite[Theorem 2.7]{Lerner}, one deduces that there exist dyadic lattices $\mathcal{I}_j$, $j \in \{1,2,3 \}$, and a $\phi \in C^{\infty}_c (\mathbb{R})$ with $ \mathrm{supp} (\phi) \subseteq [-2,-1/2] \cup [1/2,2]$ so that
\begin{equation}\label{rev_1}
\Bigg\| \sum_{k \in \mathbb{Z}} \sum_{ \substack{ l \in \mathbb{Z} : \\ l < k }} P_{I_{k,l}^+} ( \psi_{k,l} ) \Bigg\|_{L^2 (\sigma)} \lesssim [\sigma]_{A_2}^{1/2} \sum_{j=1}^3  \Bigg\| S_{\phi, \mathcal{I}_j} \Bigg( \sum_{k \in \mathbb{Z}} \sum_{ \substack{ l \in \mathbb{Z} : \\ l < k }} P_{I_{k,l}^+} ( \psi_{k,l} ) \Bigg) \Bigg\|_{L^2 (\sigma)},
\end{equation}
where
$$ S_{\phi, \mathcal{I}_j} (g) (x) : = \Bigg( \sum_{\nu \in \mathbb{Z} } \sum_{\substack{ I \in \mathcal{I}_j : \\ | I | =2^{- \nu} } } \Big( \frac{1}{ | I | } \int_I | \widetilde{P}_{\nu} (g) (x') |^2 d x' \Big) \chi_I (x) \Bigg)^{1/2} $$
and $ \widetilde{P}_{\nu} $ denotes the multiplier operator with symbol $ \phi (2^{-\nu} \xi)$, $\xi \in \mathbb{R}$. For the definition of dyadic lattices and their basic properties, we refer the reader to \cite{LN}. Notice that
\begin{equation}\label{observation}
S_{\phi, \mathcal{I}_j} \Bigg( \sum_{k \in \mathbb{Z}} \sum_{ \substack{ l \in \mathbb{Z} : \\ l < k }}  P_{I_{k,l}^+} ( \psi_{k,l} ) \Bigg) (x) \leq   \Gamma_{\mathcal{I}_j,0} (x) +  \Gamma_{\mathcal{I}_j,1} (x) \quad \text{for} \ j \in \{ 1,2,3\},
\end{equation}
where 
$$ \Gamma_{\mathcal{I}_j,r} (x) : = \Bigg( \sum_{\nu \in \mathbb{Z}} \sum_{\substack{ I \in \mathcal{I}_j : \\ | I | =2^{- \nu} } }  \Bigg[ \frac{1}{| I |} \int_I \Bigg| \widetilde{P}_{\nu} \Bigg( \sum_{l < \nu + r} P_{I_{\nu+r, l}^+} (\psi_{\nu+r, l}) \Bigg) (y) \Bigg|^2 d y \Bigg] \chi_I (x) \Bigg)^{1/2} , $$
$r \in \{ 0,1 \}$. In view of \eqref{rev_1}, to prove \eqref{dual_main+} it suffices to show that
\begin{equation}\label{bound_j,r}
\| \Gamma_{\mathcal{I}_j,r} \|_{L^2 (\sigma)} \lesssim [ \sigma ]_{A_2}^{3/2} \Bigg\| \Bigg( \sum_{k \in \mathbb{Z}} \sum_{\substack{ l \in \mathbb{Z} : \\ l < k }} | \psi_{k,l} |^2 \Bigg)^{1/2} \Bigg\|_{L^2 (\sigma)} 
\end{equation}
for $ j \in \{1,2,3\}$ and $ r \in \{0,1\}$.

Fix a $j \in \{ 1, 2, 3 \}$. We shall only focus on the proof of \eqref{bound_j,r} for $r=0$, as the other case is treated similarly. Observe that one can write  
$$ \Bigg| \widetilde{P}_{\nu} \Bigg( \sum_{ l < \nu} P_{I_{\nu, l}^+} (\psi_{\nu, l}) \Bigg) \Bigg| = \Bigg| \sum_{ l < \nu } P_{I_l^-} \big( \rho_{\nu, l} \big) \Bigg|, $$
where $I_l^- := [ -2^l , -2^{l-1} )$, $\rho_{\nu, l} : = \widecheck{\theta_{\nu}} \ast \widetilde{\psi}_{\nu, l} $ with $\theta_{\nu} (\xi) := \phi (2^{-\nu} (\xi+2^{\nu}))$, $\xi \in \mathbb{R}$, and $\widetilde{\psi}_{\nu, l} (x) := e^{-i 2 \pi 2^{\nu} x} \psi_{ \nu, l } (x) $, $x \in \mathbb{R}$. Hence, to prove \eqref{bound_j,r} for $r=0$, it suffices to show that
\begin{equation}\label{bound_j,0}
\sum_{ \nu \in \mathbb{Z}} \Big\| \sum_{l < \nu} P_{I_l^-} \big( \rho_{\nu, l} \big) \Big\|_{L^2 (\sigma_{\nu, \mathcal{I}_j}) }^2 \lesssim [ \sigma ]_{A_2}^3 \Bigg\| \Bigg( \sum_{k \in \mathbb{Z}} \sum_{ \substack{ l \in \mathbb{Z} : \\ l < k }} | \psi_{k,l} |^2 \Bigg)^{1/2} \Bigg\|_{L^2 (\sigma)}^2  
\end{equation}
for $j \in \{1,2,3\}$, where
$$ \sigma_{\nu, \mathcal{I}_j} (x) : = \sum_{\substack{I \in \mathcal{I}_j : \\ | I | = 2^{-\nu} } } \langle \sigma \rangle_I \chi_I (x). $$
To prove \eqref{bound_j,0}, fix a $\nu \in \mathbb{Z}$ and note that by employing \cite[Lemma 3.1]{Lerner} and \cite[Theorem 2.7]{Lerner} one deduces that 
\begin{equation}\label{bound_nu}
\Bigg\| \sum_{l < \nu} P_{I_l^-} \big( \rho_{\nu, l} \big) \Bigg\|_{L^2 (\sigma_{\nu, \mathcal{I}_j}) } \lesssim 
 [\sigma ]_{A_2}^{1/2} \sum_{j'=1}^3 \Bigg\| S_{\phi, \mathcal{I}_{j'}} \Big( \sum_{l < \nu} P_{I_l^-} \big( \rho_{\nu, l} \big) \Big) \Bigg\|_{L^2 (\sigma_{\nu, \mathcal{I}_j})}
\end{equation}
for $j' \in \{ 1,2,3 \}$. Observe that since $\mathrm{supp} (\phi) \subseteq [-2,-1/2] \cup [1/2,2]$, one has
\begin{equation}\label{pw_ineq}
S_{\phi, \mathcal{I}_{j'}} \Big( \sum_{l < \nu} P_{I_l^-} \big( \rho_{\nu, l} \big) \Big) \leq \sum_{r' \in \{-1, 0, 1\} } K_{\mathcal{I}_{j'}, \nu, r'} , 
\end{equation}
where for $r' \in \{0, 1 \}$ we have
$$ K_{\mathcal{I}_{j'}, \nu, r'} (x) : =  
\Bigg( \sum_{\mu < \nu - 1 } \sum_{\substack{J \in \mathcal{I}_{j'} : \\ |J| =2^{-\mu} }} \Bigg[ \frac{1}{|J|} \int_J \Big| \widetilde{P}_{\mu} \Big( P_{I^-_{\mu+r'}} \big( \rho_{\nu, \mu +r'} \big) \Big) (y) \Big|^2 dy \Bigg] \chi_J (x) \Bigg)^{1/2}  $$
and for $r' = - 1$,
$$ K_{\mathcal{I}_{j'}, \nu, -1} (x) : =  
\Bigg(  \sum_{\substack{J \in \mathcal{I}_{j'} : \\ |J| =2^{-( \nu - 1) } }} \Bigg[ \frac{1}{|J|} \int_J \Big| \widetilde{P}_{\nu - 1} \Big( P_{I^-_{\nu - 1}} \big( \rho_{\nu, \nu - 1} \big) \Big) (y) \Big|^2 dy \Bigg] \chi_J (x) \Bigg)^{1/2} .  $$
We shall prove that there exists an absolute constant $C_0 > 0$ such that for each $\nu \in \mathbb{Z}$ one has 
\begin{equation}\label{bound_K_{0,1}}
\big\| K_{\mathcal{I}_{j'}, \nu, r' } \big\|^2_{L^2 (\sigma_{\nu, \mathcal{I}_j})} \leq C_0 [\sigma]^2_{A_2} \Big\|   \Big( \sum_{l < \nu - 1 } | \psi_{\nu, l}|^2 \Big)^{1/2} \Big\|^2_{L^2 (\sigma )} \quad \text{for}\ r' \in \{0,1\}
\end{equation}
and
\begin{equation}\label{bound_K_{-1}}
\big\| K_{\mathcal{I}_{j'}, \nu, -1 } \big\|^2_{L^2 (\sigma_{\nu, \mathcal{I}_j})} \leq C_0 [\sigma]^2_{A_2} \| \psi_{\nu, \nu- 1} \|^2_{L^2 (\sigma )}
\end{equation}
for all $j' \in \{1,2,3\}$. We shall only provide the details of the proof of \eqref{bound_K_{0,1}} for $r'=0$, as the other cases, i.e. \eqref{bound_K_{0,1}} for $r'=1$ and \eqref{bound_K_{-1}} are treated similarly. To prove \eqref{bound_K_{0,1}} for $r' =0$, fix a $j' \in \{1,2,3\}$ and write  
\begin{equation}\label{equality}
\big\| K_{\mathcal{I}_{j'}, \nu, 0} \big\|^2_{L^2 (\sigma_{\nu, \mathcal{I}_j}) } = \sum_{ \mu < \nu - 1} \big\|   T_{ m_{ \nu, \mu }} \big( P_{I_{\mu}^-} ( \widetilde{\psi}_{\nu, \mu} ) \big) \big\|_{L^2 ( (\sigma_{\nu, \mathcal{I}_j})_{\mu, \mathcal{I}_{j'}}) }^2 
\end{equation}
where $m_{ \nu, \mu } ( \xi ) := \phi (2^{-\mu} \xi) \phi (2^{-\nu} (\xi + 2^{\nu})) $, $\xi \in \mathbb{R}$ and
$$ (\sigma_{\nu, \mathcal{I}_j})_{\mu, \mathcal{I}_{j'}} (x) := \sum_{ \substack{J \in \mathcal{I}_{j'} : \\ |J|= 2^{- \mu}} }  \langle \sigma_{\nu, \mathcal{I}_j} \rangle_J \chi_J (x) .$$
Observe that since $ \mu < \nu - 1 $ one has
\begin{equation}\label{derivative_bound}
| ( m_{ \nu, \mu } )' (\xi ) | \leq 2^{-\mu} | \phi' (2^{-\mu} \xi) | \| \phi \|_{L^{\infty} (\mathbb{R})} + 2^{-\mu} | \phi (2^{-\mu} \xi) | \| \phi' \|_{L^{\infty} (\mathbb{R})}
\end{equation}
for all $\xi \in \mathbb{R}$. Hence, by using \eqref{derivative_bound} and an appropriate variant of \cite[Lemma 3.2]{Lerner}; see Section \ref{mod} below, one gets
\begin{equation}\label{variant_lemma}
\big\| T_{ m_{ \nu, \mu } } \big( P_{I_{\mu}^-} ( \widetilde{\psi}_{\nu, \mu} ) \big) \big\|_{L^2  ( (\sigma_{\nu, \mathcal{I}_j})_{\mu, \mathcal{I}_{j'}} ) }^2 \lesssim [\sigma]_{A_2}^2 \| \psi_{\nu, \mu} \|_{L^2 (\sigma)}^2,
\end{equation}
where we also used the fact that $| \widetilde{\psi}_{\nu, \mu} | = | \psi_{\nu, \mu} |$. 

By combining \eqref{equality} with \eqref{variant_lemma}, one establishes \eqref{bound_K_{0,1}} for $r'=0 $. One shows \eqref{bound_K_{0,1}} for $r'=1$ and \eqref{bound_K_{-1}} similarly. Notice that it follows from \eqref{pw_ineq}, \eqref{bound_K_{0,1}}, \eqref{bound_K_{-1}}, and \eqref{bound_nu} that \eqref{bound_j,0} holds.  We thus obtain \eqref{bound_j,r} for $r=0$. The proof of \eqref{bound_j,r} for $r=1$ is similar.   Therefore, the proof of \eqref{dual_main+} is now complete, in view of \eqref{rev_1}, \eqref{observation}, and \eqref{bound_j,r}. One shows \eqref{dual_main-} in an analogous way. We have thus established Theorem \ref{main}. 

%%%%%%%%%%%%%%%%
%%%%%%%%%%%%%%%%

\subsection{A variant of \cite[Lemma 3.2]{Lerner}}\label{mod}

In the previous section, inequality \eqref{variant_lemma} was obtained by using the following variant of \cite[Lemma 3.2]{Lerner}.

\begin{lemma}\label{modification}
Let $\mathcal{I}, \mathcal{J}$ be two given dyadic lattices in $\mathbb{R}$ and let $\mu, \nu \in \mathbb{Z}$ be such that $\mu < \nu$.  

If $m$ is bounded on an interval $K$ and differentiable in the interior of $K$ with
$$ C_{m,K} := \| m \|_{L^{\infty} (K)} + \int_{K} |m'(\xi)| d \xi < \infty, $$
then there exists an absolute constant $c_0 > 0$ such that
$$ \| P_{K} (T_m (f)) \|_{L^2 ( (\sigma_{\nu, \mathcal{I}})_{\mu, \mathcal{J}}) } \leq c_0 C_{m,K} [\sigma]_{A_2} (2^{-\mu} |K| + 1) \| f \|_{L^2 (\sigma)} $$
for every $A_2$ weight $\sigma$ on $\mathbb{R}$.
\end{lemma}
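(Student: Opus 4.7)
The plan is to adapt Lerner's proof of \cite[Lemma 3.2]{Lerner} to accommodate the two new features of our statement: the doubly stepped weight $\rho := (\sigma_{\nu, \mathcal{I}})_{\mu, \mathcal{J}}$ on the output side, and the fact that the frequency interval $K$ is no longer tied to the spatial scale $2^{-\mu}$ of the outer averaging. The latter mismatch is precisely what should produce the factor $(2^{-\mu}|K|+1)$.

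Writing $K = [a,b]$, the first step is to integrate $m$ by parts in frequency: for $\xi \in K$ one has $m(\xi) = m(a) + \int_a^\xi m'(\eta)\, d\eta$, and so at the multiplier level
$$ P_K T_m = m(a)\, P_K + \int_a^b m'(\eta)\, P_{[\eta, b]}\, d\eta. $$
Combining this with Minkowski's inequality in $L^2(\rho)$ and with the hypothesis $C_{m,K} = \|m\|_{L^\infty(K)} + \int_K |m'|\, d\xi$, the lemma reduces to the uniform sub-interval estimate
$$ \|P_L f\|_{L^2(\rho)} \leq c\,[\sigma]_{A_2}\bigl(2^{-\mu}|K|+1\bigr)\,\|f\|_{L^2(\sigma)}, \qquad L \subseteq K. $$
Next, the standard identity $P_{(-\infty, t]} = \tfrac{1}{2}\bigl(\mathrm{Id} - i\, M_t H M_{-t}\bigr)$, where $M_t g(x) := e^{i 2\pi t x} g(x)$ and $H$ is the Hilbert transform, decomposes $P_L$ into a difference of modulated Hilbert transforms. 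Since modulations are isometries on every weighted $L^2$ space, the task is reduced to bounding $H \colon L^2(\sigma) \to L^2(\rho)$ by the same constant, at least for inputs with Fourier support in $K$.

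For the weighted bound I would first observe that, because $\mu < \nu$, the inner averaging at scale $2^{-\nu}$ is absorbed by the outer one at the coarser scale $2^{-\mu}$, yielding a pointwise comparison $\rho \lesssim \sigma_{\mu, \mathcal{J}}$ (up to a harmless boundary correction) and in particular $[\rho]_{A_2} \lesssim [\sigma]_{A_2}$; Petermichl's sharp one-weight theorem then gives $\|Hg\|_{L^2(\rho)} \lesssim [\sigma]_{A_2}\,\|g\|_{L^2(\rho)}$. The main obstacle is the remaining passage from $\|g\|_{L^2(\rho)}$ to $\|g\|_{L^2(\sigma)}$ for $g$ frequency-localized in $K$, and in particular the extraction of the factor $(2^{-\mu}|K|+1)$. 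Heuristically, when $2^{-\mu}|K| \lesssim 1$ the function $g$ oscillates on a scale $\geq 2^{-\mu}$ and is therefore essentially constant on the cubes $J \in \mathcal{J}$ of size $2^{-\mu}$ defining $\rho$, so the $\rho$- and $\sigma$-local $L^2$-averages of $|g|^2$ are comparable with an absolute constant; when $2^{-\mu}|K| \gg 1$, one subdivides $K$ into $O(2^{-\mu}|K|)$ sub-intervals of the previous type and applies Cauchy--Schwarz, yielding the asserted linear loss. Making this uncertainty-principle transfer rigorous -- most likely by convolving the relevant characteristic functions with smooth bumps adapted to the scale $2^{-\mu}$ and carefully comparing the resulting averaged $A_2$ quantities -- is where the bulk of the technical work lies.
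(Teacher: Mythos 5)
Your opening reduction (integrating $m$ by parts to write $P_KT_m = m(a)P_K+\int_a^b m'(\eta)P_{[\eta,b]}\,d\eta$, then Minkowski, then the modulation identity turning $P_L$ into modulated Hilbert transforms) is sound and is in fact the same mechanism that underlies the paper's pointwise inequality \eqref{M_pw}. The gap is in how you then handle the mismatch between the averaged weight $\rho=(\sigma_{\nu,\mathcal I})_{\mu,\mathcal J}$ on the output and $\sigma$ on the input. First, a logical slip: after the modulation identity the inputs to $H$ are $M_{-t}f$, which are modulations of the \emph{arbitrary} function $f$ and carry no Fourier localization, so the uncertainty-principle transfer you invoke cannot be applied to them; it can only be applied to the band-limited \emph{output} $P_Lf$. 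Second, and more seriously, even the repaired version loses a power of $[\sigma]_{A_2}$. The rigorous form of ``band-limited at width $\le 2^{\mu}$ implies locally constant at scale $2^{-\mu}$'' is the reproducing inequality $|g|\lesssim |g|\ast\phi_{\mu}\lesssim Mg$, and converting $\|g\|_{L^2(\rho)}$ into $\|g\|_{L^2(\sigma)}$ this way passes through $\|M\|_{L^2(\sigma)\to L^2(\sigma)}\lesssim[\sigma]_{A_2}$. Composed with the Petermichl bound for $H$ (or the sharp weighted bound for $P_L$) you arrive at $[\sigma]_{A_2}^2$, not $[\sigma]_{A_2}$. This is not a cosmetic loss: the linear dependence in Lemma \ref{modification} is exactly what produces the exponent $2$ in Theorem \ref{main} and hence the sharp rate $(p-1)^{-2}$ matching the lower bound; a quadratic dependence here would only give $[w]_{A_2}^{3}$ and $(p-1)^{-3}$. (Your alternative for $2^{-\mu}|K|\gg1$, subdividing $K$ and applying Cauchy--Schwarz, runs into the same issue plus a weighted square-function estimate for arbitrary intervals, which itself costs further powers of the $A_2$ constant.)

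The paper avoids any composition of two weighted operator norms by proving a single pointwise domination: for each $J\in\mathcal J$ with $|J|=2^{-\mu}$ and all $x,y\in 2J$,
$$ |P_K(T_mf)(y)|\;\le\; A_0C_{m,K}\,T_K(f)(x)\;+\;\int_K H^{\ast}(M_{-t}f)(x)\,|m'(t)|\,dt \;=:\;u(x), $$
where $T_K(f)=H^{\ast}(M_{-b}f)+H^{\ast}(M_{-a}f)+(2^{-\mu}|K|+1)M(f)$ acts \emph{directly on $f$}. The factor $(2^{-\mu}|K|+1)$ arises here, as the error $\lesssim |K|\,|x-y|\,Mf(x)$ incurred when comparing the sharp frequency cutoffs evaluated at $y$ with truncated Hilbert transforms evaluated at $x$. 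Since the right-hand side is independent of $y\in J$, one gets $\frac{1}{|J|}\int_J|P_K(T_mf)|^2\le\inf_{x\in2J}u(x)^2$, and hence $\langle\sigma\rangle_I\frac{|I|}{|J|}\int_J|P_K(T_mf)|^2\le\int_Iu^2\sigma$ for each $I\subseteq 2J$; summing converts the doubly averaged weight back into $\sigma$ with no loss, and the lemma follows from the single-power bounds $\|H^{\ast}\|_{L^2(\sigma)\to L^2(\sigma)},\|M\|_{L^2(\sigma)\to L^2(\sigma)}\lesssim[\sigma]_{A_2}$. To fix your argument you would need to replace the two-step ``bound $H$ on $L^2(\rho)$, then change the weight'' scheme by this kind of simultaneous pointwise transfer.
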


\begin{proof} 
Let $\mu, \nu$ be two given integers such that $\mu < \nu$ and let $\sigma$ be a given $A_2$ weight. 
We have
$$ (\sigma_{\nu, \mathcal{I}})_{\mu, \mathcal{J}} (x)  = \sum_{ \substack{J \in \mathcal{J} : \\ |J|= 2^{- \mu}} }  \langle \sigma_{\nu, \mathcal{I}} \rangle_J \chi_J (x) = \sum_{ \substack{J \in \mathcal{J} : \\ |J|= 2^{- \mu}} } \sum_{ \substack{I \in \mathcal{I} : \\ |I|= 2^{- \nu}} } \langle \sigma  \rangle_I \frac{|I \cap J|}{|J|} \chi_J (x) . $$
Fix a $J \in \mathcal{J}$ with $|J|= 2^{- \mu}$. Notice that, by arguing as in the proof of \cite[Lemma 3.2]{Lerner}, one has
\begin{equation}\label{M_pw}
| P_{K} (T_m (f)) (y) | \leq A_0 C_{m, K} T_K (f) (x) + \int_{K} H^{\ast} (M_{-t} f) (x) |m'(t)| dt
\end{equation}
for all $x,y \in 2J$, where $A_0 > 0$ is an absolute constant, $C_{m, K}$ is as in the statement of the lemma, and
$$ T_K (f) (x) := H^{\ast} (M_{-b} f) (x) + H^{\ast} (M_{-a} f) (x) + (2^{-\mu} |K| + 1) M (f)(x) .$$
Here $a$ and $b$ are the left and right endpoints of $K$, respectively. We write
$$ u(x) := A_0 C_{m, K} T_K (f) (x) + \int_{K} H^{\ast} (M_{-t} f) (x) |m'(t)| dt \quad \text{for}\  x \in 2J. $$
As in \cite{Lerner}, one has
$$ \frac{1}{|J|} \int_J | P_{K} (T_m (f)) (y) |^2 dy \leq \inf_{x \in 2J} [ u(x) ]^2 $$
and hence, for every $I \in \mathcal{I}$ with $| I | = 2^{-\nu}$ and $I \cap J \neq \emptyset$ one has
\begin{equation}\label{bound_I}
\langle \sigma \rangle_I \frac{|I|}{|J|} \int_J | P_{K} (T_m (f)) (y) |^2 dy \leq \int_I [u(x)]^2 \sigma (x) dx ,
\end{equation}
where we used the fact that if $I \cap J \neq \emptyset$ and $2 | I | \leq | J |$, then $I \subseteq 2J$. Hence, by using \eqref{bound_I} and the fact that the intervals $I \in \mathcal{I}$ with $I \subseteq 2J$ and $| I | = 2^{-\nu}$ have mutually disjoint interiors and their union is contained in $2J$, we deduce that
\begin{equation}\label{bound_J}
\sum_{ \substack{I \in \mathcal{I} : \\ |I|= 2^{-\nu}} } \langle \sigma \rangle_I \frac{|I \cap J|}{|J|} \int_J | P_{K} (T_m (f)) (y) |^2 dy \leq \int_{2J} [u(x)]^2 \sigma (x) dx 
\end{equation}
for every $J \in \mathcal{J}$ with $|J| = 2^{-\mu}$.

Since the intervals $J$ in $\mathcal{J}$ with $|J| = 2^{-\mu}$ `tile' $\mathbb{R}$, one deduces from \eqref{bound_J} that
\begin{equation}\label{final_bound}
\| P_{K} (T_m (f)) \|_{L^2 ( (\sigma_{\nu, \mathcal{I}})_{\mu, \mathcal{J}}) } \leq 2 \| u \|_{L^2 (\sigma)} .
\end{equation}
Arguing as in \cite{Lerner}, one completes the proof of the Lemma \ref{modification} by using \eqref{final_bound}, Minkowski's inequality, as well as the well-known bounds $ \| M \|_{L^2 (\sigma) \rightarrow L^2 (\sigma)} \lesssim [\sigma]_{A_2}$ and 
$ \| H^{\ast} \|_{L^2 (\sigma) \rightarrow L^2 (\sigma)} \lesssim [\sigma]_{A_2}$; see \cite{H-L} and \cite{H-P} for more general results involving two-weighted inequalities.
\end{proof}

%%%%%%%%%%%%%%%%%%%%%%%%%%%%
%%%%%%%%%%%%%%%%%%%%%%%%%%%%
%%%%%%%%%%%%%%%%%%%%%%%%%%%%
%%%%%%%%%%%%%%%%%%%%%%%%%%%%

\section{Proof of Theorem \ref{p-p}}\label{proof_1}

Arguing as in \cite[Remark 4.1]{Lerner}, the upper estimate in Theorem \ref{p-p} follows from Theorem \ref{main} combined with \cite[Theorem 3.1]{Duoandikoetxea}. 

The lower estimate in Theorem \ref{p-p} is obtained by adapting ideas from Bourgain's paper \cite{Bourgain_89} to our case. To be more specific, fix a Schwartz function $\eta$ satisfying the properties $\mathrm{supp} (\widehat{\eta}) \subseteq [1/2,4]$ and $\widehat{\eta}|_{[1,2]} \equiv 1$. For $N \in \mathbb{N}$, define $\eta_N$ by 
$$ \widehat{\eta_N} (\xi) := \widehat{\eta} (N^{-1} \xi), \quad \xi \in \mathbb{R} .$$
Notice that, since $\| \eta_N \|_{L^1 (\mathbb{R})} = \| \eta \|_{L^1 (\mathbb{R})}$ and $\| \eta_N \|_{L^2 (\mathbb{R})} = N^{1/2} \| \eta \|_{L^2 (\mathbb{R})}$, one deduces 
\begin{equation}\label{p_norm}
\| \eta_N \|_{L^p (\mathbb{R})} \lesssim N^{(p-1)/p} \quad (1 < p < 2). 
\end{equation}
Fix a $p \in (1,2)$ `close' to $1^+$ and choose $N \in \mathbb{N}$ such that 
\begin{equation}\label{choice_N,p}
\log N \sim (p-1)^{-1} .
\end{equation}
By using \eqref{p_norm} and \eqref{choice_N,p}, Minkowski's inequality, and H\"older's inequality, we have
\begin{align*}
\| S_{\mathcal{I}_{E_2}} \|_{L^p (\mathbb{R}) \rightarrow L^p (\mathbb{R})} \gtrsim \| S_{\mathcal{I}_{E_2}}  (\eta_N) \|_{L^p (\mathbb{R}) } & \geq \| S_{\mathcal{I}_{E_2}}  (\eta_N) \|_{L^p ([0,1]) } \\
&\geq \Bigg( \sum_{k \in \mathbb{Z}} \sum_{\substack{l \in \mathbb{Z}: \\ l < k}} \big\| P_{I_{k,l}^+} ( \eta_N ) \big\|_{L^p ([0,1])}^2 \Bigg)^{1/2} \\
&\geq \Bigg( \sum_{k=2}^{\lfloor \log N \rfloor} \sum_{l=1}^{k-1} \big\| P_{I_{k,l}^+} (\eta_N) \big\|_{L^1 ([0,1])}^2 \Bigg)^{1/2},
\end{align*}
where $\lfloor x \rfloor$ denotes the integer part of $x \in \mathbb{R}$. Since $\widehat{\eta_N} (\xi) = 1$ for $\xi \in [N, 2N]$, one can easily check that for all integers $k, l $ with $ 2 \leq k \leq \lfloor \log N \rfloor $ and $ 1 \leq l < k$ one has
$$ \big| P_{I_{k,l}^+} (\eta_N) (x) \big| \sim \Big| \frac{\sin (\pi l x)}{x} \Big| \quad \text{for all }  x \neq 0 .$$
Hence, a standard computation yields that
$$ \big\| P_{I_{k,l}^+} ( \eta_N ) \big\|_{L^1 ([0,1])} \sim \log \big( 4 \big| I_{k,l}^+ \big| \big) \sim l $$
for all $k, l \in \mathbb{N}$ with $ 2 \leq k \leq \lfloor \log N \rfloor $ and $ 1 \leq l < k$. We thus have
$$ \| S_{\mathcal{I}_{E_2}} \|_{L^p (\mathbb{R}) \rightarrow L^p (\mathbb{R})} \gtrsim \Bigg( \sum_{k=2}^{\lfloor \log N \rfloor} \sum_{l=1}^{k-1} l^2  \Bigg)^{1/2} \sim ( \log N )^2, $$
which, together with \eqref{choice_N,p}, shows that the lower estimate in Theorem \ref{p-p} holds true. 

%%%%%%%%%%%%%%%%%%%%%%%%%%%%
%%%%%%%%%%%%%%%%%%%%%%%%%%%%
%%%%%%%%%%%%%%%%%%%%%%%%%%%%
%%%%%%%%%%%%%%%%%%%%%%%%%%%%

\section{Some further remarks}\label{remarks}

\subsection{Periodic versions of Theorems \ref{p-p} and \ref{main}} If $f$ is a trigonometric polynomial on $\mathbb{T}$, define $S_2 (f)$ by
$$ S_2 (f) (\theta) := \Bigg( |\widehat{f}(0)|^2+ \sum_{ \substack{ k , l \in \mathbb{N}_0 : \\ k > l }} \Big[ |\Delta_{I^+_{k,l}} (f) (\theta)|^2 + |\Delta_{I^-_{k,l}} (f) (\theta )|^2 \Big] \Bigg)^{1/2} \quad (\theta \in \mathbb{T}) ,$$
where 
$$ \Delta_{I^+_{k,l}} (f) (\theta) : = \sum_{n \in I^+_{k,l}} \widehat{f} (n) e^{i 2 \pi n \theta}  \quad (\theta \in \mathbb{T})$$
and
$$  \Delta_{I^-_{k,l}} (f) (\theta) : = \sum_{n \in I^-_{k,l} } \widehat{f} (n) e^{i 2 \pi n \theta}  \quad (\theta \in \mathbb{T}) $$
with $I^+_{k,l}$ and $I^-_{k,l}$ being as in the Euclidean case; $I^+_{k,l} : = [2^k - 2^l, 2^k- 2^{l-1})$ and $I^-_{k,l} : = [ - 2^k + 2^{l-1}, - 2^k + 2^l )$. 

A straightforward adaptation of the argument of Section \ref{proof_2} to the periodic setting yields
\begin{equation}\label{main_T}
\| S_2 (f) \|_{L^2 (w) } \lesssim [w]^2_{A_2 (\mathbb{T})} \| f \|_{L^2 (w)}
\end{equation}
for all periodic $A_2$ weights $w$ and for every trigonometric polynomial $f$ on $\mathbb{T}$, where the implied constant in \eqref{main_T} is independent of $w$ and $f$. Recall that a non-negative integrable function $\sigma$ on $\mathbb{T}$ is said to be a periodic $A_2$ weight if, and only if,
$$ [\sigma]_{A_2 (\mathbb{T})} := \sup_{ \substack{ I \subseteq \mathbb{T} : \\ \text{arc}}}  \langle \sigma \rangle_I \langle \sigma ^{-1} \rangle_I < \infty  . $$
Moreover, by using \eqref{main_T} and a periodic version of \cite[Theorem 3.1]{Duoandikoetxea} as well as an adaptation of the argument of the previous section to the periodic setting, one deduces that there exist absolute constants $c_1, c_2 >0$ such that
\begin{equation}\label{p-p_T}
\frac{c_1} {(p-1)^2} \leq \| S_2 \|_{L^p (\mathbb{T}) \rightarrow L^p (\mathbb{T})} \leq \frac{c_2} {(p-1)^2} \quad (1 < p \leq 2).
\end{equation}

%%%%%%%%%%%%%%%%
%%%%%%%%%%%%%%%%

\subsection{Littlewood-Paley operators formed with respect to certain lacunary sets of finite order}
By arguing as in the proof of Theorem \ref{proof_2}, one can show that  $\| S_{\mathcal{I}_{\widetilde{E}_2}} \|_{L^2 (w) \rightarrow L^2 (w)} \lesssim [w]_{A_2}^2 $, where $ \widetilde{E}_2 := \{ \pm ( 2^k  +  2^l ) :  k , l \in \mathbb{Z}  \text{ with } k > l  \} $. More generally, if one considers the lacunary set of order $N \in \mathbb{N}$ (with $N \geq 2$) given by 
$$ \widetilde{E}_N : = \{ \pm ( 2^{k_1} + \cdots + 2^{k_N} ): k_1, \cdots , k_N \in \mathbb{Z} \text{ with }  k_1 > \cdots > k_N \} $$
and $S_{\mathcal{I}_{\widetilde{E}_N}}$ denotes the corresponding Littlewood-Paley operator, then by  suitably modifying and iterating the first part of the proof of Theorem \ref{proof_2} and next, by using an appropriate extension of Lemma \ref{modification}, one can show that
\begin{equation}\label{w_n}
 \| S_{\mathcal{I}_{\widetilde{E}_N}} \|_{L^2 (w) \rightarrow L^2 (w)} \lesssim [w]_{A_2}^{1 + N/2} 
\end{equation}
for any $A_2$ weight $w$ on $\mathbb{R}$, where the implied constant in \eqref{w_n} depends only on $N$; we omit the details. By using \eqref{w_n} and an adaptation of the argument of Section \ref{proof_1}, one gets
\begin{equation}\label{p-p_n}
 \| S_{\mathcal{I}_{\widetilde{E}_N}} \|_{L^p (\mathbb{R}) \rightarrow L^p (\mathbb{R})} \sim (p-1)^{-(1+N/2)} \quad (1<p \leq 2),
\end{equation}
where the implied constants in \eqref{p-p_n} depend only on $N$. Analogous results hold in the periodic setting.

%%%%%%%%%%%%%%%%%%%%%%%%%%%%
%%%%%%%%%%%%%%%%%%%%%%%%%%%%
%%%%%%%%%%%%%%%%%%%%%%%%%%%%
%%%%%%%%%%%%%%%%%%%%%%%%%%%%

\section*{Acknowledgements}

The author would like to thank Francesco Di Plinio for an interesting discussion during the HAPDE 2019 conference in Helsinki and in particular, for asking the author the question about the behaviour of the $L^p-L^p$ operator norm of $S_{\mathcal{I}_{E_2}}$ as $p \rightarrow 1^+$. The author would also like to thank Alan Sola for his useful comments that improved the presentation of this paper.

The author was supported by the `Wallenberg Mathematics Program 2018', grant no. KAW 2017.0425, financed by the Knut and Alice Wallenberg Foundation.

%%%%%%%%%%%%%%%%%%%%%%%%%%%%
%%%%%%%%%%%%%%%%%%%%%%%%%%%%
%%%%%%%%%%%%%%%%%%%%%%%%%%%%
%%%%%%%%%%%%%%%%%%%%%%%%%%%%

\end{document}